\documentclass[a4, 11pt]{amsart}
\usepackage{amssymb,latexsym}
\usepackage{color}
\usepackage[left=35mm, right=35mm, top=30mm, bottom=30mm, includefoot, includehead]{geometry}
\theoremstyle{plain}
\newtheorem{theorem}{Theorem}[section]
\newtheorem{corollary}[theorem]{Corollary}
\newtheorem{proposition}[theorem]{Proposition}
\newtheorem{lemma}[theorem]{Lemma}
\theoremstyle{definition}
\newtheorem{definition}[theorem]{Definition}

\newtheorem{remark}[theorem]{Remark}
\newtheorem{question}[theorem]{Question}

\newcommand{\enm}[1]{\ensuremath{#1}}          %

\newcommand{\cal}[1]{\mathcal{#1}}

\newcommand{\NN}{\enm{\mathbb{N}}}

\newcommand{\ZZ}{\enm{\mathbb{Z}}}

\newcommand{\PP}{\enm{\mathbb{P}}}

\newcommand{\Aa}{\enm{\cal{A}}}
\newcommand{\Bb}{\enm{\cal{B}}}

\newcommand{\Ee}{\enm{\cal{E}}}

\newcommand{\Ii}{\enm{\cal{I}}}

\newcommand{\Ll}{\enm{\cal{L}}}

\newcommand{\Oo}{\enm{\cal{O}}}

\renewcommand{\phi}{\varphi}
\renewcommand{\theta}{\vartheta}
\renewcommand{\epsilon}{\varepsilon}


      %

\renewcommand{\to}[1][]{\xrightarrow{\ #1\ }}







\newcommand{\old}[1]{}


\date{}
\title[Minimal degree equations for curves and surfaces]{Minimal degree equations for curves and surfaces \\ (variations on a theme of Halphen)}

\author{Edoardo Ballico and Emanuele Ventura}

\address{Universit\`a di Trento, 38123 Povo (TN), Italy}
\email{edoardo.ballico@unitn.it}

\address{Dept. of Mathematics, Texas A\&M University,
College Station, TX 77843-3368, USA}
\email{eventura@math.tamu.edu, emanueleventura.sw@gmail.com}

\keywords{Minimal degree equations, Linear systems.}
\subjclass[2010]{(Primary) 14N05; (Secondary) 14H50, 14J25.}

\begin{document}

\maketitle

\begin{abstract}
Many classical results in algebraic geometry arise from investigating some extremal behaviors that appear
among projective varieties not lying on any hypersurface of fixed degree. We study two numerical invariants attached to such collections of varieties: their minimal degree and their maximal number of linearly independent smallest degree hypersurfaces passing through them. We show results for curves and surfaces, and pose several questions. 
\end{abstract}

\section{Introduction}\label{intro}

\noindent In this note, we study two numerical invariants attached to projective varieties, focusing on the low-dimensional cases of curves and surfaces. To introduce the problem, let $X$ be an $m$-dimensional integral projective variety in $\mathbb P^n$. Let $s$ be an integer with the property that $X$ is not contained in any hypersurface of degree strictly smaller than $s$, i.e., $h^0(\mathcal I_X(s-1))=0$.  Perhaps, the first basic question one might ask is as follows: 

\begin{question}\label{mindeg}
What is the minimal degree that such an $X$ may have?
\end{question}

Moreover, one might wonder what is the largest number of linearly independent hypersurfaces of degree $s$ passing through such an $X$, or more formally:

\begin{question}\label{maxnumb}
For such a projective variety $X$, how big $h^0(\mathcal I_X(s))$ may be?
\end{question}

Both Question \ref{mindeg} and Question \ref{maxnumb} concern extremal behaviors, which are often fundamental phenomena in algebraic geometry. When $X$ is a curve, a variant to the questions above is about the maximal genus of a curve given a prescribed linear series on it, or the maximal genus of a curve not lying on a surface of fixed degree: these are the subjects of the famous Castelnuovo's and Halphen's theories respectively, which have been of crucial importance in the theory of curves and much beyond; see, e.g., the works of Chiantini and Ciliberto \cite{cc}, Di Gennaro and Franco \cite{df}, for results on Castelnuovo-Halphen's theory in higher dimensional projective spaces. In the same vein, another line of research investigates the $k$-normality of a projective variety, i.e., the objective is to find out the least integer $k$ such that the system of degree $k$ hypersurfaces of the ambient projective space cut out a complete linear system on $X$; see, e.g., the works of Gruson, Lazarsfeld, and Peskine \cite{glp}, Lazarsfeld \cite{laz87}, Alzati and Russo \cite{ar} for developments in this direction. 

Question \ref{mindeg} may be regarded as the very first step towards classifying non-degenerate minimal degree varieties not lying on any hypersurface of degree $< s$. Therefore the classification of the usual non-degenerate minimal degree varieties may be viewed as the case when $s=2$ \cite{hsv, park1}. 
 
Park \cite[Problem A]{park} posed the problem of determining the maximal possible values of $h^0(\mathcal I_X(s))$, for every $s$ without further assumptions; thus our Question \ref{maxnumb} is more restrictive but different, inasmuch as we require the integer $s$ to be the minimal such that $h^0(\mathcal I_X(s))\neq 0$. Park's problem is in fact classical and had previously led, for instance, to significant results such as a characterization of minimal degree varieties by Castelnuovo in terms of the number of linearly independent quadrics passing through them. The answer to this problem in the case of curves is now well-known: an upper bound was first obtained by Harris \cite{harris81}, and later improved with different methods by L'vovsky \cite{Lvov96}. Furthermore, Park's extremal cases are related to the study of varieties $X\subset \PP^n$ of almost-minimal degree, i.e., $\deg(X)=\textnormal{codim}(X)+2$; see \cite{bs, fu, park1, hsv}.\\

\noindent {\bf Contributions and structure of the paper.} In \S\ref{notation}, we define the collection of projective varieties $\Aa(n,s,m)$ we are concerned with, and two basic numerical invariants: the minimal degree $d_{n,s,m}$ appearing among all the varieties in $\Aa(n,s,m)$, and 
the maximal number $\alpha(n,s,m)$ of linearly independent hypersurfaces of degree $s$ vanishing on a given element of $\Aa(n,s,m)$. 
The study of these invariants has classical roots, as pointed out in \S\ref{intro}, and yet is new, to the best of our knowledge.  

In \S\ref{curves}, we focus on the case of curves. Proposition \ref{i1} establishes the value of the minimal degree $d_{n,s,1}$, whereas Lemma \ref{i2}
gives the possible values of arithmetic genera of curves in $\Aa(n,s,1)$ with minimal degree.

In Lemma \ref{i00+}, we record the value of $h^0(\mathcal I_X(s))$. We conjecture a bound on its maximum value, $\alpha(n,s,1)$. Question \ref{qw0} asks whether this is the maximum possible. In Remark \ref{qw00}, we observe instances where the bound proposed in Question \ref{qw0} does hold. 
The proof of Proposition \ref{i1} shows that the minimal degree is reached by some smooth rational curves: Question \ref{xq1} asks whether this is possible for any other degree $d > d_{n,s,1}$. Remark \ref{numrangexq1true} points out a numerical range where the latter question has an affirmative answer. 
In Remark \ref{x2}, we collect our current knowledge around Question \ref{xq1} in the case of $n=3$. Here we generalize {\bf range A} of curves
in $\PP^3$ to curves embedded in higher projective spaces and pose several questions about them (Question \ref{questiongeneralizedrangeA}). 
Finally, in Proposition \ref{ccc1}, using curves in $\Aa(n,s,1)$, we produce a non-degenerate irreducible projective variety (that is not a cone) of arbitrarily high degree in $\Aa(n,m,s)$, for every $m\geq 1$, with $n\geq m+3$. 

In \S\ref{surfaces}, we deal with the case of surfaces. In Theorem \ref{ma1}, we show useful upper bounds for the dimension of linear systems on a smooth surface, using one of the building blocks of Mori theory (the {\it Kawamata Rationality Theorem}). Remark \ref{ma2} and Proposition \ref{be1} (proven in \cite{bea}) point out some circumstances where we can derive some information on $d_{n,s,2}$ and $d_{n,3,2}$, respectively. The findings in Theorem \ref{ma1}, along with Proposition \ref{be1}, are summarized in Theorem \ref{be2}. Finally, Remark \ref{w3} collects some cohomological facts about almost-minimal degree surfaces. \\
\vspace{3mm}

\begin{small}
\noindent {\bf Acknowledgements.} The first author was partially supported by MIUR and GNSAGA of INdAM (Italy). The second author would like to thank the Department of Mathematics of Universit\`{a} di Trento, where part of this project was conducted, for the warm hospitality. 
\end{small}

\section{Two numerical invariants}\label{notation}
Our varieties are over the complex numbers. We introduce the collections of projective varieties we are concerned with: 

\begin{definition}
Let $m \ge 1$, $s\ge 3$ and $n\ge m+2$. Define 
\[
\Aa (n,s,m) = \left\lbrace \mbox{integral $m$-dimensional } X\subset \PP^n \ | \   h^0(\Ii _X(s-1)) =0 \right\rbrace. 
\] 
\end{definition}

Since $s>1$, any $X\in \Aa (n,s,m)$ is non-degenerate, i.e., it spans the ambient projective space, as one clearly has $h^0(\Ii_X(1)) = 0$. 

\begin{definition}
We introduce two numerical invariants: 
\[
d_{n,s,m} = \min \left\lbrace \deg(X) \ | \  X\in \Aa (n,s,m) \right\rbrace, \mbox{ and }
\]  
\[
\alpha (n,s,m) = \max \left\lbrace h^0(\Ii _X(s)) \ | \ X\in \Aa(n,s,m) \right\rbrace. 
\] 

Hence $\alpha(n,s,m)$ is the maximal number, over the set $\Aa(n,s,m)$, of the linearly independent hypersurfaces of degree $s$ vanishing on some $X\in \Aa(n,s,m)$.\end{definition}

We conclude this preliminary section with an observation about minimal degrees: 
\begin{remark}
For $m\ge 2$, we have $d_{n,s,m} \le d_{n-1,s, m-1}$. To see this, take $X\subset \mathbb P^{n-1}$ of dimension $m-1$ with minimal degree $\deg(X)=d_{n-1,s,m-1}$. Let $C(X)$ be the cone over $X$ with a single point as vertex: this sits in $\PP^n$, it has dimension $m$ and degree $d_{n-1,s,m-1}$. Moreover, it is clear that $C(X)\in \Aa(n,s,m)$. 
\end{remark}
\section{Curves}\label{curves}

In this section, we focus on the case of curves, i.e., $m=1$. For a curve $X$, $p_a(X)$ denotes its arithmetic genus. 
For the ease of notation, we set 
\[
\Aa (n,s):= \Aa (n,s,1), d_{n,s}:= d_{n,s,1}, \mbox{ and } \alpha (n,s):= \alpha (n,s,1).
\]
We introduce another piece of notation: 
\[
H(d,g;n) = \left\lbrace \mbox{smooth, connected, non-deg. } X\subset \PP^n, \deg(X) = d, p_a(X) = g\right\rbrace. 
\] 
In the range of degree $d\ge g+1$, define 
\[
H(d,g;n)' = \left\lbrace X\in H(d,g;n)\ | \ h^1(\Oo _X(1)) =0\right\rbrace. 
\]
Note that $H(d,g; n)'=H(d,g; n)$ whenever $d>2g-2$, as $\Oo_X(1)$ is non-special. 

\begin{proposition}\label{i1}
The numerical invariant $d_{n,s}$ satisfies
\[
d_{n,s} = \left\lceil \left(\binom{n+s-1}{n}-1\right)/(s-1)\right\rceil.
\]
\end{proposition}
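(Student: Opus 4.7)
The plan is to establish the equality by matching a lower and an upper bound. Set $N := \binom{n+s-1}{n}$.

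For the lower bound I would use the $(s-1)$-uple Veronese embedding $\nu_{s-1} \colon \mathbb P^n \hookrightarrow \mathbb P^{N-1}$: the hypothesis $h^0(\mathcal I_X(s-1))=0$ translates, via pullback of hyperplanes, to $\nu_{s-1}(X) \subset \mathbb P^{N-1}$ being non-degenerate. Since $\nu_{s-1}|_X$ is an isomorphism onto its image and pulls $\mathcal O_{\mathbb P^{N-1}}(1)$ back to $\mathcal O_X(s-1)$, the image $\nu_{s-1}(X)$ is an integral curve of degree $d(s-1)$ in $\mathbb P^{N-1}$, where $d = \deg(X)$. The classical fact that any non-degenerate integral curve in $\mathbb P^{N-1}$ has degree at least $N-1$ then forces $d(s-1) \geq N-1$, hence $d \geq \lceil (N-1)/(s-1) \rceil$.

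For the matching upper bound I would construct a smooth rational curve $C \subset \mathbb P^n$ of degree $d := \lceil (N-1)/(s-1) \rceil$ realizing $h^0(\mathcal I_C(s-1))=0$ (an elementary estimate shows $d \geq n$ whenever $n \geq 3$ and $s \geq 3$). The construction proceeds by degeneration: start from the rational normal curve $R_n \subset \mathbb P^n$ and recursively attach lines $L_1, \ldots, L_{d-n}$, each meeting the previous configuration transversely at a single point (feasible since $n \geq 3$). For the base, surjectivity of the restriction $H^0(\mathcal O_{\mathbb P^n}(s-1)) \rightarrow H^0(\mathcal O_{R_n}(s-1))$ (which reduces to surjectivity of multiplication on $\mathbb P^1$) gives $h^0(\mathcal I_{R_n}(s-1)) = N - n(s-1) - 1$. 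At each inductive step the ideal sheaf sequence
\[
0 \rightarrow \mathcal I_{C' \cup L}(s-1) \rightarrow \mathcal I_{C'}(s-1) \rightarrow \mathcal O_L(s-2) \rightarrow 0
\]
shows that $h^0(\mathcal I(s-1))$ drops by at most $s-1$, with equality for a generic attached $L$ so long as the current value is still $\geq s-1$. After $d-n$ attachments one obtains a connected, reducible, nodal rational curve $C'$ of degree $d$ with $h^0(\mathcal I_{C'}(s-1))=0$. Finally, $C'$ smooths in $\mathbb P^n$ (standard deformation theory for a tree of smooth rational curves, using $n \geq 3$) to an irreducible smooth rational curve $C$ of degree $d$; upper semicontinuity of $h^0$ in flat families preserves the vanishing, so $C \in \mathcal A(n,s)$ realizes the lower bound.

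The delicate step is the claim that the drop is genuinely $s-1$ at each attachment, i.e.\ that for generic $L$ the restriction $H^0(\mathcal I_{C'}(s-1)) \rightarrow H^0(\mathcal O_L(s-2))$ attains maximal rank. This is a Bertini-type genericity statement ruling out unexpected vanishing of the current ideal sheaf on generic secant lines, and is the main technical obstacle to making the induction rigorous. An alternative route bypassing the induction is to invoke the classical maximal-rank property for a general smooth rational curve of degree $d$ in $\mathbb P^n$: this directly yields $h^0(\mathcal I_C(s-1)) = \max(0, N - d(s-1) - 1) = 0$ for our choice of $d$, giving the construction in one stroke.
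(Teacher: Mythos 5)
Your proposal is correct and, at its core, follows the same route as the paper: the lower bound via the Veronese embedding is just a repackaging of the paper's argument that $h^0(\mathcal I_Y(s-1))=0$ forces $(s-1)\deg(Y)+1\ge h^0(\Oo_Y(s-1))\ge\binom{n+s-1}{n}$ (the minimal-degree bound for non-degenerate integral curves is exactly the estimate $h^0(L)\le\deg(L)+1$ applied to the hyperplane bundle), and your fallback for the upper bound --- invoking the maximal rank property of a general smooth rational curve of degree $d\ge n$ --- is precisely the paper's construction. Your primary degeneration argument is only a sketch with an acknowledged gap at the generic-secant-line step, but since you supply the rigorous alternative, the proof stands.
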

\begin{proof}
Let 
\[
\delta =  \left\lceil \left(\binom{n+s-1}{n}-1\right)/(s-1)\right\rceil.
\]
We show that $d_{n,s} = \delta$. 

For any integer $d\ge n$, the set $H(d,0;n)$ of all smooth and non-degenerate degree $d$ rational curves $X\subset \PP^n$ has the structure of a smooth and integral algebraic variety, whose general point $X$ has {\it maximal rank}: hence $h^0(\Ii _X(t)) =0$ for some $t\in \NN$ if and only if $d t+1\geq h^0(\Oo_{\PP^n}(t))$, i.e., whenever it is possible for the restriction morphism to be injective; see \cite{hi, be3}. Thus a general $X\in H(d,0;n)$ satisfies
\[
h^0(\Ii _X(s-1)) =0 \ \ \mbox{ if and only if } \ \ (s-1)d+1\ge \binom{n+s-1}{n}. 
\] 
This implies that $d_{n,s}\leq \delta$, as we have a (rational) integral curve of degree $\delta$. 

For the converse inequality, consider any integral curve $Y\subset \PP^n$ such that $h^0(\Ii _Y(s-1)) =0$, and let $k = \deg (Y)$. Since $h^0(\Ii _Y(s-1)) =0$, we have $h^0(\Oo _Y(s-1)) \ge \binom{n+s-1}{n}$. Since $Y$ is an integral curve (of arbitrary arithmetic genus), elementary considerations give 
\[
h^0(L) \le \deg (L)+1,
\] 
for an arbitrary line bundle $L$ on $Y$. Thus 
\[
\deg(\Oo_Y(s-1))+1 = (s-1)k+1 \ge h^0(\Oo_Y(s-1))\geq \binom{n+s-1}{n}.
\]
Taking the minimum, the latter inequality implies $d_{n,s}\geq \delta$, which completes the proof. 
\end{proof}

\begin{lemma}\label{i2}
Let $X\subset \PP^n$ be an integral curve $X\in \Aa(n,s)$ such that $\deg (X)=d_{n,s}$. Then its arithmetic genus satisfies
\[
p_a(X) \le (s-1)d_{n,s}+1-\binom{n+s-1}{n}.
\]
\end{lemma}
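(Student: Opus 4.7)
The plan is to apply Riemann--Roch to $L := \Oo_X(s-1)$ on the integral curve $X$ and reduce the claimed bound to the vanishing of $h^1(L)$; this vanishing is then forced by Clifford's inequality combined with the minimality of $d_{n,s}$ from Proposition \ref{i1}.

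First, since $h^0(\Ii_X(s-1)) = 0$, the structure sequence
\[
0 \to \Ii_X(s-1) \to \Oo_{\PP^n}(s-1) \to L \to 0
\]
yields $h^0(L) \ge \binom{n+s-1}{n}$. Riemann--Roch on the integral curve $X$ reads $h^0(L) - h^1(L) = (s-1) d_{n,s} + 1 - p_a(X)$, and substituting in the lower bound above gives
\[
p_a(X) - h^1(L) \le (s-1) d_{n,s} + 1 - \binom{n+s-1}{n}.
\]
Hence it suffices to prove that $h^1(L) = 0$.

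Suppose for contradiction that $h^1(L) > 0$. Since $L$ has global sections (as the restriction of $\Oo_{\PP^n}(s-1)$), Clifford's theorem, in its version valid for integral projective curves, gives $h^0(L) \le (s-1) d_{n,s}/2 + 1$. Combining with $h^0(L) \ge \binom{n+s-1}{n}$ forces $(s-1) d_{n,s} \ge 2\binom{n+s-1}{n} - 2$. On the other hand, the explicit formula for $d_{n,s}$ from Proposition \ref{i1} gives $(s-1) d_{n,s} \le \binom{n+s-1}{n} + s - 3$ (the ceiling in the formula contributes at most $s-2$ to $(s-1) d_{n,s}$). Chaining these two inequalities forces $\binom{n+s-1}{n} \le s - 1$, which is impossible in the range $n \ge 3$, $s \ge 3$ of interest, since $\binom{n+s-1}{n} \ge n + s - 1 \ge s + 2$. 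Hence $h^1(L) = 0$ and the bound follows.

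The main obstacle I anticipate is invoking Clifford's inequality on a possibly non-Gorenstein integral curve. The statement is classical for smooth or Gorenstein curves via Serre duality and the multiplication map $H^0(L) \otimes H^0(\omega \otimes L^{-1}) \to H^0(\omega)$; its extension to arbitrary integral projective curves is well known in the literature, and if desired one may bypass it through a careful comparison between $X$ and its normalization $\tilde X \to X$.
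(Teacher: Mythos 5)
Your proof is correct and follows essentially the same route as the paper: reduce to the vanishing of $h^1(\Oo_X(s-1))$ via Riemann--Roch, then rule out $h^1(\Oo_X(s-1))>0$ by playing Clifford's inequality against the upper bound on $(s-1)d_{n,s}$ coming from the ceiling formula in Proposition \ref{i1}, arriving at the same contradiction $\binom{n+s-1}{n}\le s-1$. Your explicit remark that Clifford's theorem must be invoked in a form valid for possibly singular integral curves is a welcome precision that the paper's proof leaves implicit.
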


\begin{proof}
Since $h^0(\Ii _X(s-1))=0$, Riemann-Roch shows that the statement is true if $h^1(\Oo _X(s-1)) =0$. Otherwise, assume $h^1(\Oo _X(s-1)) >0$. Since $h^0(\Oo _X(s-1)) \ge \binom{n+s-1}{n}$, Clifford's theorem gives $(s-1)d_{n,s} \ge 2\binom{n+s-1}{n}-2$. By Proposition \ref{i1}, we have 
\[
(s-1)(d_{n,s} -1)+2 \le \binom{n+s-1}{n}.
\]
Altogether, they give $\binom{n+s-1}{n}\le s-1$, contradicting the assumption $s\ge 3$.  
\end{proof}

\begin{remark}\label{i3}
Fix an integer $g$ such that $0\le g \le  (s-1)d_{n,s}+1-\binom{n+s-1}{n}$. Again, as in the proof of Lemma \ref{i2}, Proposition \ref{i1} yields
\[
(s-1)(d_{n,s} -1)+2 \le \binom{n+s-1}{n},
\] 
and so $g\le s-2$. 
\vspace{3mm}

\begin{quote}
{\it Claim.} We have $s-2 \le d_{n,s}-n$.\\
{\it Proof of the Claim.} By contradiction, assume $d_{n,s} \le n+s-3$. By Proposition \ref{i1}, in order to obtain a contradiction, it is enough to show 
\[
(s-1)(n+s-3) +2 \le \binom{n+s-1}{n}.
\] 
Let $\phi (n,s) = \binom{n+s-1}{n} - (s-1)(n+s-3)-2$. This function is non-negative in the desired range. Indeed, consider
\[
\phi (3,s) = (s+2)(s+1)s/6 -s(s-1)-2.
\]
It is clear this is non-negative for all $s\ge 3$. Moreover: 
\[
\phi (n+1,s) -\phi (n,s) =\binom{n+s-2}{n} -s+1 \ge 0, \ \ \mbox{ for all } \ \  s\ge 3, 
\]
which leads to a contradiction. 
\end{quote}

By the {\it Claim} and \cite{be2, be3}, a general curve $X\in H(d_{n,s},g;n)'$ has maximal rank. Moreover, for each integer $g$ with $0\le
g\le  (s-1)d_{n,s}+1-\binom{n+s-1}{n}$, there exists a curve $X\in H(d_{n,s},g;n)$ such that $h^0(\Ii _X(s-1))=0$. Since $h^1(\Oo _X(1)) =0$, we
have $h^0(\omega_X(-1)) = 0$. The latter condition is equivalent to $h^0(\omega_X(-t))=0$ for all $t\geq 1$. 
Hence $h^1(\Oo _X(t)) =0$ for $t=s-1,s$. Moreover, we have: 
\[
h^1(\Ii _X(s-1)) = (s-1)d_{n,s}+1-g -\binom{n+s-1}{n},
\]
\[
h^0(\Ii _X(s)) = h^1(\Ii _X(s)) +\binom{n+s}{n} -sd_{n,s}-1+g.
\] 
Now, choose $g_0 = (s-1)d_{n,s} + 1 -\binom{n+s-1}{n}$, so that $h^1(\Ii _X(s-1)) =0$. 
Furthermore 
\[
h^2(\Ii _X(s-2)) =h^1(\Oo _X(s-2)) =0,
\] 
where the right-most equality holds because $s\ge3$. By the well-known Castelnuovo-Mumford's Lemma, we derive 
\[h^1(\Ii _X(t)) =0 \mbox{ for all } t\ge s. \] 
Finally,
\[
h^0(\Ii _X(s)) =\binom{n+s}{n} -sd_{n,s}-1+g_0 = \binom{n+s-1}{n-1}-d_{n,s}.
\]
\end{remark}

To summarize, in Remark \ref{i3}, we have shown the following: 

\begin{remark}\label{observation}
Setting $g_0 = (s-1)d_{n,s} + 1 -\binom{n+s-1}{n}$, We have: 
\[
d_{n,s} \ge n+g_0;
\]
additionally, a general curve $X\in H(d,g_0;n)'$ satisfies 
\[
h^0(\Ii _X(s-1)) =0 \mbox{ and } h^0(\Ii _X(s))=\binom{n+s-1}{n-1} -d_{n,s}.
\]
\end{remark}

\begin{question}\label{qw0}
Is it true that 
\[
\alpha (n,s) = \binom{n+s-1}{n-1} -d_{n,s}?
\] 
Keep the definition of $g_0$ from Remark \ref{observation}. Does equality hold if and only if $X$ is a general curve in $H(d_{n,s}, g_0; n)'$?
\end{question}

\begin{lemma}\label{i00+}
Let $n\ge 3$ and $s\ge3$. Let $X\subset \PP^n$ be an integral and non-degenerate curve such that $h^0(\Ii _X(s-1))  = 0$. Fix a general
hyperplane $H\subset \PP^n$.
Then 
\begin{align}\label{eqi00+} 
&
h^0(\Ii _X(s))  =  \binom{n+s-1}{n-1}-\deg (X)  -h^1(\Ii_X(s-1)) + h^1(\Ii _X(s))+ \\
& \nonumber
 \ \ \ \ \ \ \ \ \ \ \ \ \ \ \ \ \ \ \ \  \ \ \ \ + h^1(\Oo _X(s-1)) -h^1(\Oo _X(s)).
\end{align}
\end{lemma}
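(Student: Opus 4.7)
The identity is a bookkeeping consequence of three short exact sequences and Pascal's identity, so I would treat it as a routine cohomology chase. The three ingredients I would use are:

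\smallskip

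\emph{(1) The ideal sequence in degrees $s{-}1$ and $s$.} For $t\in\{s-1,s\}$ take
\[
0 \to \Ii_X(t) \to \Oo_{\PP^n}(t) \to \Oo_X(t) \to 0,
\]
and use $h^1(\Oo_{\PP^n}(t))=0$ to obtain
\[
h^0(\Ii_X(t)) = \binom{n+t}{n} - h^0(\Oo_X(t)) + h^1(\Ii_X(t)).
\]
Plugging in the hypothesis $h^0(\Ii_X(s-1))=0$ gives
\[
h^0(\Oo_X(s-1)) = \binom{n+s-1}{n} + h^1(\Ii_X(s-1)),
\]
while the same formula for $t=s$ expresses $h^0(\Ii_X(s))$ in terms of $h^0(\Oo_X(s))$ and $h^1(\Ii_X(s))$.

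\smallskip

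\emph{(2) The hyperplane section on the curve.} For a general hyperplane $H$, Bertini (for an integral curve in $\PP^n$, $n\ge 3$) ensures $Z:=X\cap H$ is a zero-dimensional subscheme of length $\deg(X)$. Then
\[
0 \to \Oo_X(s-1) \to \Oo_X(s) \to \Oo_{Z}(s) \to 0
\]
has $h^0(\Oo_Z(s))=\deg(X)$ and $h^1(\Oo_Z(s))=0$, so the long exact sequence and additivity of the Euler characteristic yield
\[
h^0(\Oo_X(s)) - h^0(\Oo_X(s-1)) = \deg(X) + h^1(\Oo_X(s)) - h^1(\Oo_X(s-1)).
\]
(Equivalently, one could invoke Riemann--Roch for the integral curve $X$ in degrees $s-1$ and $s$ and subtract; the two approaches give the same identity, but the hyperplane section version matches the phrasing of the statement.)

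\smallskip

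\emph{(3) Assembly.} Substituting the expression for $h^0(\Oo_X(s))$ from step (2), and then the expression for $h^0(\Oo_X(s-1))$ from step (1), into the formula for $h^0(\Ii_X(s))$ from step (1), one gets
\[
h^0(\Ii_X(s)) = \binom{n+s}{n} - \binom{n+s-1}{n} - \deg(X) - h^1(\Ii_X(s-1)) + h^1(\Ii_X(s)) + h^1(\Oo_X(s-1)) - h^1(\Oo_X(s)).
\]
Applying Pascal's identity $\binom{n+s}{n} - \binom{n+s-1}{n} = \binom{n+s-1}{n-1}$ produces \eqref{eqi00+}.

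\smallskip

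There is no real obstacle: the only input beyond sheaf cohomology formalism is that a general hyperplane intersects the integral curve $X$ in a zero-dimensional scheme of length $\deg(X)$, which is immediate. The statement is essentially a reformulation of the jump of $h^0-h^1$ of $\Ii_X$ between degrees $s-1$ and $s$, measured against the known jumps on $\PP^n$ and on $X$.
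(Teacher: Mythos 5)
Your proof is correct, and it reaches the identity by a genuinely different (though equally routine) decomposition from the paper's. The paper runs the long exact cohomology sequence of the single short exact sequence $0\to \Ii _X(s-1)\to \Ii _X(s)\to \Ii _{X\cap H,H}(s)\to 0$, using $h^0(\Ii_X(s-1))=0$, the identification $h^1(\Oo_X(t))=h^2(\Ii_X(t))$, and the Euler characteristic $\chi(\Ii_{X\cap H,H}(s))=\binom{n+s-1}{n-1}-\deg X$ for the ideal of the zero-dimensional scheme $X\cap H$ in $H\cong \PP^{n-1}$. You never touch that restriction sequence of ideal sheaves: instead you combine the two structure-sheaf sequences $0\to \Ii_X(t)\to\Oo_{\PP^n}(t)\to\Oo_X(t)\to 0$ for $t=s-1,s$ with the divisor sequence $0\to\Oo_X(s-1)\to\Oo_X(s)\to\Oo_{X\cap H}(s)\to 0$ on the curve, and finish with Pascal's identity. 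The two chases are of course two ways around the same commutative square of sheaves, but yours has the merit of isolating the only geometric input, namely $\chi(\Oo_X(s))-\chi(\Oo_X(s-1))=\deg\Oo_X(1)=\deg X$ (as you note, step (2) is just the Riemann--Roch increment), and of showing that ``general'' for $H$ only means $X\not\subset H$; your appeal to Bertini is unnecessary for that. The paper's one-sequence version is more compact and has the advantage that the intermediate quantities $h^0(H,\Ii_{X\cap H,H}(s))$ and $h^1(H,\Ii_{X\cap H,H}(s))$ it produces are exactly the ones analyzed afterwards in Remark \ref{qw00} via Castelnuovo's argument, which is presumably why the authors phrase it that way.
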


\begin{proof}
Fix a general hyperplane $H\subset \PP^n$. Note that $h^0(H,\Ii _{X\cap H}(s)) =\binom{n+s-1}{n-1}-\deg (X)+h^1(H,\Ii _{X\cap H}(s))$. Since $h^1(\Oo _X(t)) =h^2(\Ii _X(t))$ for all $t\in \mathbb Z$, in order to conclude it is sufficient to use the long exact sequence in cohomology of the exact sequence
\begin{equation}\label{eqi01}
0\to \Ii _X(s-1)\to \Ii _X(s)\to \Ii _{X\cap H,H}(s)\to 0, 
\end{equation}
along with the assumption $h^0(\Ii _X(s-1)) =0$.
\end{proof}

Now we discuss the term appearing on the right-hand side of equality (\ref{eqi00+}). 

\begin{remark}\label{qw00}
In the setting of Lemma \ref{i00+}, as usual let $d = \deg (X)$ and $g = p_a(X)$. 
Since $h^0(\Ii _X(s-1)) =0$, by Riemann-Roch, we have 
\[
h^1(\Ii _X(s-1)) = (s-1)d +1-g-\binom{n+s-1}{n}.
\] 
Define the numerical function $\psi (t) := h^1(\Oo _X(t-1)) -h^1(\Oo _X(t))$. For fixed integers $g$, $h^1(\Ii _X(s))$ and $\psi(s)$, the right-hand side of (\ref{eqi00+}) is a strictly decreasing function in $d$.

Since $\deg (\Oo _X(t)) =d+\deg (\Oo _X(t-1))$, Serre duality and Riemann-Roch give 
\[
0\le \psi (t) \le d.
\]
Therefore a class of curves for which the term $h^1(\Oo _X(s-1)) -h^1(\Oo _X(s))$ is zero is realized when $h^1(\Oo _X(s-1)) =0$.

Moreover, note that the long exact sequence in cohomology of (\ref{eqi01}) yields
\[
h^0(\Ii _X(s)) \le \binom{n+s-1}{n-1} -d,
\] 
whenever $h^1(H,\Ii _{X\cap H,H}(s)) =0$. As in \cite[Chapter 3]{szp}, let $\chi$ denote the minimal non-negative integer such that $h^1(H,\Ii _{X\cap H,H}(\chi +1)) =0$. As classically shown by Castelnuovo, using (\ref{eqi01}) one obtains $h^1(\Oo _X(t)) =0$ for all $t\ge \chi$ \cite[Theorem 1, p. 52]{szp}.
Furthermore, $h^1(\Ii _X(t)) \ge h^1(\Ii _X(t+1))$ for all $t\ge \chi$ \cite[Lemma 2, p. 53]{szp}, 
and $h^1(\Ii _X(t))> h^1(\Ii _X(t+1))$ whenever $t>\chi$ and $h^1(\Ii _X(t))\ne 0$.

\end{remark}

Note that the proof of Proposition \ref{i1} shows that a curve in $\mathcal A(n,s)$ of minimal degree $d_{n,s}$ can be chosen to be smooth (and rational). 
More generally, we wonder the following: 

\begin{question}\label{xq1}
Let $n\ge 3$, $s\ge 3$ and $d> d_{n,s}$. 
\begin{enumerate}
\item[(i)] Is there an integral and non-degenerate curve $X\subset \PP^n$ such that $\deg (X)=d$, $h^0(\Ii _X(s-1)) =0$ and
$h^0(\Ii _X(s))\ne 0$?
\item[(ii)] Can we choose such an $X$ to be smooth?
\end{enumerate}
\end{question}

In the next two remarks we recall our knowledge around Question \ref{xq1} for $n=3$. 

\begin{remark}\label{x2}
Recall that, by Proposition \ref{i1}, 
\[
d_{3,s} = \left\lceil \left(\binom{s+2}{3}-1\right)/(s-1)\right\rceil.
\]  
For $d, s\ge 3$, to  each pair $(d,s)$, we attach the set of degree $d$ curves in $\PP^3$ 
{\it not} contained in any surface of degree $<s$. Given $(d,s)$, Halphen's question asks in particular to determine the maximum genus of a curve associated to this pair. It is customary to divide the set of pairs $(d,s)$ into four regions: \\

{\bf Range $\emptyset$}: $d < (s^2+4s+6)/6 $. In this range, there is no integral and non-degenerate curve $X\subset \PP^3$ such that
$\deg (X)=d$ and $h^0(\Ii _X(s-1))=0$ \cite[Theorem 3.3]{h0}, \cite{hh}. 

\vspace{5mm}
{\bf Range A}: $(s^2+4s+6)/6 \le d < (s^2+4s+6)/3$. In this range, one finds curves with  
$h^1(\Oo _X(s-1)) =0$ and hence $h^1(\Oo _X(s))=0$. Thus if $h^0(\Ii _X(s-1))=0$ and $h^0(\Ii _X(s)) \ne 0$, by Riemann-Roch one has $(s-1)d+1-g
\ge \binom{s+2}{3}$ and $sd+1-g\le \binom{s+3}{3}$. Moreover, if $\lceil s(s+2)/4\rceil \le d <(s^2+4s+6)/3$, it is known such a curve exists for any such choice of $(d,s)$, along with curves reaching the maximal genera allowed; see, e.g., \cite{bbem, fl1,fl2}.  To fill in all 
$(d,s)$ such that $(s^2+4s+6)/6\le d < \lceil s(s+2)/4\rceil$ the same result (the existence of curves with $(d,s)$ and $g= (s-1)d+1- \binom{s+2}{3}$) is only known for $s$ large enough, i.e., $s \geq 10.5\times 10^5$, by the recent \cite[Theorem 1]{be6}. 

\vspace{5mm}
{\bf Range B}: $(s^2+4s+6)/3\le d < s(s-1)$. Hartshorne and Hirschowitz constructed, for any $(d, s)$ in this range, smooth space curves with very high genera and conjectured that they have the maximal genus for all curves associated to $(d,s)$ \cite{hh, ms}. 

\vspace{5mm}
{\bf Range C}: $d>s(s-1)$. One finds a smooth degree $d$ space curve $X$ such that
$\deg (X)=d$ and $h^0(\Ii _X(s-1))=0$; Gruson and Peskine classified all such curves $X$ of maximal genus \cite{gp1}: 
if $d\equiv 0\pmod{s}$, these curves are complete intersections of a surface of degree $s$ and a surface of degree $d/s$; in all other
cases $X$ is {\it linked} to a plane curve $C$ of degree $\deg(C) = s\lceil d/s\rceil -d$, by the complete intersection of a surface of degree $s$
and a surface of degree $\lceil d/s\rceil$. For $d\geq s^2$, we have $h^0(\Ii _X(s)) \le 2$ with equality if and only if $d=s^2$
and $X$ is the complete intersection of two surfaces of degree $s$.
\end{remark}

\begin{remark}\label{x3}
Assume $(d,s)$ is in {\bf Range A}. Let
$X\subset \PP^3$ be any integral curve with $\deg (X)=d$, $h^1(\Oo _X(s-1)) =0$, $h^0(\Ii _X(s-1)) =0$, and $h^0(\Ii _X(s))
\ne 0$, i.e., by Riemann-Roch one has $(s-1)d+1-g\ge \binom{s+2}{3}$, and $sd+1-g\le \binom{s+3}{3}$, where $g = p_a(X)$ is the arithmetic genus
of $X$. All curves constructed in \cite{bbem, be6, bef, fl1, fl2} have $h^1(\Ii _X(s-1)) = h^2(\Ii _X(s-1)) =0$ and hence $h^0(\Ii _X(s)) =\binom{s+2}{2} - d$. The ones in the main results of \cite{bbem, be6} have $g= d(s-1)+1-\binom{s+2}{3}$, whereas the ones given in \cite[Proposition 4.3]{bbem} and in \cite{bef}
have many different genera for the fixed pair $(d,s)$.
\end{remark}

We generalize {\bf Range A} to curves embedded in $\PP^n$ for any $n\geq 3$. For all integers $n\geq 3$, $s\ge 2$ and $d\ge d_{n,s}$, let $\Bb (n,s,d)$ (resp. $\Bb (n,s,d)'$) denote the set of all smooth and connected (resp. integral) $X\in \Aa (n,s)$ such that $\deg (X)=d$ and $h^1(\Oo _X(s-1)) = 0$.

\begin{definition}
Curves in $\Bb (n,s,d)$ are said to be in the {\it generalized} {\bf Range A}.
\end{definition}

\begin{remark}
Let $X\in \Bb (n,s,d)'$. Since $h^1(\Oo _X(s-1))  =0$, 
Riemann-Roch gives $h^0(\Oo _X(s-1)) =d(s-1)+1-p_a(X)$. Since $h^0(\Ii _X(s-1))=0$, one obtains $h^0(\Oo _X(s-1))\geq h^0(\Oo
_{\PP^n}(s-1)) =\binom{n+s-1}{n}$, and so: 
\[
p_a(X)\le d(s-1)+1-\binom{n+s-1}{n}.
\]
\end{remark}

\begin{question}\label{questiongeneralizedrangeA}
We pose the following questions:
\begin{enumerate}
\item[(i)] For which choices of $d, s, n$, does there exist $X\in \Bb (n,s,d)$ (or even $X\in \Bb (n,s,d)'$) such that $p_a(X) =d(s-1)+1-\binom{n+s-1}{n}$?
\item[(ii)] What is the maximal arithmetic genus of curves in $\Bb (n,s,d)$ (or in $\Bb (n,s,d)'$)?
\item[(iii)] For which $g_0$ and every $0\le g \le g_0$, does there exist $X\in \Bb (n,s,d)$ (or $X\in \Bb (n,s,d)'$) with such arithmetic genus? 
\end{enumerate}
\end{question}

\begin{remark}
For any $X\in \Bb (n,s,d)'$, one has $h^2(\Ii _X(s-1))=0$. Suppose there exists $X\in \Bb (n,s,d)'$ such that $p_a(X) =d(s-1)+1-\binom{n+s-1}{n}$. Thus 
$h^0(\Ii _X(s-1)) = h^2(\Ii _X(s-1)) =0$. By Riemann-Roch, $h^0(\Oo _X(s-1))=0$, and hence $h^1(\Ii _X(s-1)) =0$. Therefore, these curves are exactly the integral and non-degenerate degree $d$
curves $X\subset \PP^n$ such that $h^i(\Ii _X(s-1)) =0$ for all $i\ge 0$; this is because for any curve $Y\subset \PP^n$, one has
 $h^i(\Ii_Y(t)) =h^{i+1}(\Oo _{\PP^n}(t))=0$ for all $i\ge 3, i\neq n$ and all $t\in \ZZ$, and for all $t>-n$ when $i=n$. 
 
Take any hyperplane $H\subset \PP^n$. By the exact sequence (\ref{eqi01}), we
have $h^1(\Ii _X(s)) = h^1(H,\Ii _{X\cap H,H}(s))$. 
Assume $h^1(\Oo _X(s-2))=0$, i.e., $h^1(\Ii _X(s-2))=0$.
Since $h^i(\Ii _X(t)) =0$ for all $i\ge 3$ and $t> -n$, we have $h^i(\Ii _X(s-1-i)) =0$ for all $i\ge 0$. In this case,
Castelnuovo-Mumford's lemma gives $h^1(\Ii _X(t)) =0$ for all $t\ge s$: this implies that the homogeneous ideal of $X$ is generated by
$\textnormal{H}^0(\mathcal I_X(s))$. 

Now we drop the assumption $h^1(\Oo _X(s-2))=0$, and instead suppose $h^1(\Ii _X(s)) =0$,
i.e., $h^1(H,\Ii _{X\cap H,H}(s))=0$. This implies $d\le \binom{n+s-1}{n-1}$. In this case,
the Castelnuovo-Mumford's lemma gives $h^1(\Ii _X(t)) =0$ for all $t>s$, which implies that the homogeneous ideal of $X$ is generated by
$\textnormal{H}^0(\mathcal I_X(s))$ and $\textnormal{H}^0(\mathcal I_X(s+1))$.
\end{remark}

\begin{remark}\label{numrangexq1true}
Fix integers $n, d, g, s$ such that $n\ge 3$, $s\ge 3$, with 
\begin{enumerate}
\item[(i)] $0 \le g \le d-n$, 

\item[(ii)] $(n+1)d \ge ng +n(n+1)$,

\item[(iii)] $(s-1)d \ge \binom{n+s-1}{n} +g-1$, and

\item[(iv)] $sd \le \binom{n+s-1}{n}+g-2$.
\end{enumerate}
By results shown in \cite{be2, be4, be5}, there exists a smooth, connected, and non-degenerate {\it maximal rank} 
curve $X\subset \PP^n$ with $\deg (X) = d$, arithmetic genus $p_a(X)=g$, and $h^0(\Ii _X(s)) =\binom{n+s}{n} -sd+g-1$. Hence, setting the arithmetic genus to be the maximal in this range, i.e., $g=d-n$, (i) and (ii) in Question \ref{xq1} have a positive answer, whenever
\[
h^0(\Ii _X(s))>0 \Longleftrightarrow (s-1)d < \binom{n+s}{n} -n-1.
\]
Note that inequality (iii) implies $h^0(\Ii _X(s-1)) = 0$, because $X$ has maximal rank. 
\end{remark}

Using curves of minimal degree in $\Aa(n,s)$, for every $m\ge 1$ (with $n\geq m+3$), we construct an non-degenerate, irreducible variety (that is not a cone) of arbitrarily high degree in $\Aa(n,s,m)$; this is accomplished in Proposition \ref{ccc1}, based on the vector bundle construction featured in the next remark. 

\begin{remark}\label{ccc0}
Let $m \ge 1$ and $e\ge 2$. Consider the rank $m+1$ vector bundle $\Ee = \Oo _{\PP^1}\oplus \Oo
_{\PP^1}(-e)^{\oplus m}$ on $\PP^1$. Define the corresponding projective bundle $T = \PP(\Ee)$ and let $\pi: T\to \PP^1$ be the vector bundle projection on $\PP^1$. The map $\pi$ makes $T$ a $\PP^{m}$-bundle and hence $\mathrm{Pic}(T) \cong \ZZ^2$, with basis given by a fiber $f$ of $\pi$ and any line bundle
on $T$ whose restriction to any fiber of $\pi$ has degree one; see, e.g., \cite[Ex. II.7.9]{h}. Among these
generators we take the only one, say $h$, such that
$|h| =\{h\}$ (this corresponds to the unique surjection $\Oo _{\PP^1}\oplus \Oo
_{\PP^1}(-e)^{\oplus m} \to \Oo _{\PP^1}$, \cite[Ch. II]{h}). As an abstract variety,
$h$ is the trivial $\PP^{m-1}$-bundle over $\PP^1$ and hence $h\cong \PP^1\times \PP^{m-1}$. 

Similarly to the case of surfaces \cite[Ch. V]{h}, for any $a, b\in\ZZ$ we have $|ah+bf|\ne\emptyset$ if and only if either $a=0$ and $b\ge 0$ or $a>0$ and $b\ge ae$. The line bundle $\Oo_T(ah+bf)$ is globally generated (resp. ample) if and only if $a\ge 0$ and $b\ge ae$ (resp. $a>0$ and $b>ae$). 

The complete linear system $h+ef$ induces a morphism $u: T\to \PP^n$, where $n =  e+m$, as $h^0(\Oo _{\PP^1}(e)\oplus \Oo _{\PP^1}^{\oplus m}) 
=e+1+m$. Let $W= u(T)$ be the image of $T$. The morphism $u$ contracts $h$ to an $(m-1)$-dimensional linear space in $\PP^r$, whereas $u|_{T\setminus h}$ is an
embedding. Thus $u$ is a morphism birational onto its image, as the support of $h$ is codimension-one. \\

\begin{quote}
{\it Claim.} The variety $W$ has degree $e$. \\
{\it Proof of the Claim.} Since $u$ is a morphism birational onto its image, $\deg (W)= (h+ef)^{m+1}$, where the latter
integer is the intersection product in the Chow ring of $T$ of $m+1$ copies of the divisor $h+ef$ defining the morphism. 
Since any two different fibers of $\pi$ are disjoint, the class $f^2$ in the Chow ring is zero. Thus $(h+ef)^{m+1} = h^{m+1} +(m+1)eh^mf$.
Fix a fiber $F\in |f|$ of $\pi$. Since $h_{|F}$ is the degree one line bundle on $F$ and $F\cong \PP^{m}$, we have $h^{m}f =1$ and
hence $(m+1)eh^{m}f =(m+1)e$. \\
If $m=1$, $T$ is a rational ruled surface. The normal bundle of $h$ has degree $h^2=-e$. Hence the conclusion follows for $m=1$. \\
Assume $m\ge 2$. Since $h \cong \PP^1\times \PP^{m-1}$, we have
$\mathrm{Pic}(h)\cong \ZZ^2$. As generators of the lattice $\mathrm{Pic}(h)$, we take the pullbacks of hyperplane classes of the factors through the projections $\pi _1: h\to \PP^1$ and $\pi _2: h\to \PP^{m-1}$, i.e., $\Oo _h(1,0) = \pi _1^\ast (\Oo_{\PP^1}(1))$ and $\Oo _h(0,1) = \pi _2^\ast (\Oo _{\PP^{m-1}}(1))$. \\
By definition, the restriction $h_{|h}$ is the normal bundle of $h$ in $T$ and hence $h_{|h} \cong
\Oo _h(-e,1)$, by induction on $m$ and using standard exact sequences on normal bundles. Thus $h^{m+1} = h^m|_{h} = -me$, concluding the proof.
\end{quote}

By this {\it Claim}, $W$ is a minimal degree $(m+1)$-dimensional subvariety of $\PP^{n}$, where $n=e+m$. It is a cone whose vertex is the $(m-1)$-dimensional linear space $V= u(h)$, and the base of the cone is a rational normal curve in any linear space $M\subset \PP^n$ such that $\dim M = n-m = e$ and $M\cap V=\emptyset$. Thus $W$ is arithmetically Cohen-Macaulay. In particular, for any integer $s>0$, a divisor $X\subset W$ is contained in a degree $s$ hypersurface of $\PP^n$ if and only if its pullback $X'$ to $T$ is a part of the linear system $|sh+sef|$, i.e., there exists a divisor $X''\ge 0$ such that $X'+X'' \in |sh+sef|$. Thus $X$ is not contained in any degree
$s$ hypersurface of $\PP^n$ if $X'\in |h+bf|$ for some integer $b \geq se+1$. In the Chow ring, we have:
\[\deg (X) =X'\cdot (h+ef)^m = (h+bf)\cdot(h+ef)^m = 
\]
\[
=h^{m+1}+meh^{m}f+b = b.
\]
\end{remark}

We are now ready to prove the following

\begin{proposition}\label{ccc1}
Let $m\ge 1$, $n\ge m+3$, $s\ge 3$ and $d\ge (s-1)d_{n-m,s}+1$. There exists an integral and non-degenerate
$m$-dimensional variety $X\subset \PP^n$, with $\deg (X)=d$ and $h^0(\Ii _X(s-1)) =0$. Moreover, $X$ is not
a cone.
\end{proposition}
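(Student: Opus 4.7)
The plan is to use the construction from Remark \ref{ccc0}. Setting $e = n - m$ (so $e \ge 3$ by the hypothesis $n \ge m+3$), I would form the $\PP^m$-bundle $T = \PP(\Oo_{\PP^1} \oplus \Oo_{\PP^1}(-e)^{\oplus m})$ together with the birational morphism $u: T \to W \subset \PP^n$ defined by the complete linear system $|h+ef|$. The candidate for $X$ is $u(X')$ where $X' \in |h+df|$ is a general member of the linear system, and the Chow-ring computation at the end of Remark \ref{ccc0} already ensures that $\deg(X) = d$.

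Most of the required properties of $X$ can then be verified along standard lines. The hypothesis $d \ge (s-1)d_{n-m,s}+1$, combined with the elementary lower bound $d_{n-m,s} \ge n-m = e$ (any non-degenerate curve in $\PP^{n-m}$ has degree at least $n-m$), yields $d \ge (s-1)e + 1 > e$. This is enough to ensure that $|h+df|$ is non-empty and sufficiently ample off the contracted locus $h$, so a Bertini-type argument produces a smooth irreducible general $X'$, giving integrality and the right dimension for $X = u(X')$. Non-degeneracy follows since a hyperplane $H \supset X$ would force $X' \le u^{*}H \in |h+ef|$, hence $d \le e$, a contradiction. That $X$ is not a cone is verified for a general $X'$ by a parameter-count argument: being a cone would impose an invariance condition on $X'$ under a nontrivial family of automorphisms of $T$ compatible with a linear projection of $\PP^n$, and this fails for the generic member of the linear system.

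The heart of the proof, and the main obstacle, is the vanishing $h^0(\Ii_X(s-1))=0$. By the criterion at the end of Remark \ref{ccc0}, $X$ lies in a degree $s-1$ hypersurface \emph{not containing} $W$ only if the class $(s-2)h + ((s-1)e - d)f$ is effective on $T$, which by the effectivity criterion recorded in the same remark is ruled out as soon as $d > e$, a condition guaranteed by the hypothesis. The subtler subcase is a degree $s-1$ hypersurface that contains the whole scroll $W$, and this is precisely where the strengthened hypothesis $d \ge (s-1)d_{n-m,s}+1$ is used. I would handle it by intersecting $X$ with a general linear $\PP^{n-m+1} \subset \PP^n$ to produce a degree $d$ curve, and applying Proposition \ref{i1} (minimality of $d_{n-m,s}$) to rule out any containment of this linear section in a degree $s-1$ hypersurface, thereby promoting the scroll-side criterion to the full vanishing in $\PP^n$. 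This last reduction, tying the divisor-in-scroll construction to the minimal-curve bound, is where I expect the argument to demand the most care.
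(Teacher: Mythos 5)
There is a fatal gap in the key step, and it originates in your choice $e=n-m$. With that choice, $W$ is the cone over a rational normal curve of degree $n-m\ge 3$, i.e.\ a variety of minimal degree, and such a variety is cut out by quadrics: $h^0(\Ii_W(2))=\binom{n-m}{2}>0$. Hence \emph{every} subvariety $X\subset W$ satisfies $h^0(\Ii_X(s-1))\ge h^0(\Ii_W(s-1))>0$ for $s\ge 3$, so the vanishing you need can never hold for your candidate $X$, no matter how large $d$ is. Your proposed repair of the ``hypersurface containing $W$'' subcase does not work either: Proposition \ref{i1} only asserts that a curve avoiding all degree-$(s-1)$ hypersurfaces must have degree at least $d_{n-m,s}$; it is a necessary condition, not a sufficient one, so it cannot be used to ``rule out containment'' of the linear section. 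In fact the general linear section $X\cap\PP^{n-m+1}$ lies on the minimal-degree surface $W\cap\PP^{n-m+1}$, hence on many quadrics, so the containment you want to exclude actually occurs. A further warning sign is that under your reading the hypothesis $d\ge (s-1)d_{n-m,s}+1$ is never used (you only need $d>e$ for the divisor-class computation), whereas it is exactly the threshold $b\ge (s-1)e+1$ of Remark \ref{ccc0} once $e$ is chosen correctly.

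The paper's construction differs precisely here: it sets $e=d_{n-m,s}$ and takes as base of the cone not a rational normal curve but a smooth rational curve $Y\subset H\cong\PP^{n-m}$ of degree $d_{n-m,s}$ with $h^0(H,\Ii_{Y,H}(s-1))=0$, whose existence comes from the maximal-rank results \cite{hi,be2,be3}. The cone $W'$ over $Y$ with vertex an $(m-1)$-plane $E$ disjoint from $H$ then lies on no degree-$(s-1)$ hypersurface (restrict such a hypersurface to a general $(n-m)$-plane section of the cone, which is projectively equivalent to $Y$), which disposes of the subcase your argument cannot handle; the remaining subcase (a hypersurface not containing $W'$) is handled by the divisor-class computation on $T$ exactly as you describe, and this is where $d\ge (s-1)d_{n-m,s}+1$ enters. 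Your treatment of degree, non-degeneracy, and the divisor-class obstruction off the scroll is fine, but the construction itself must be changed as above for the statement to be provable.
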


\begin{proof}
Let $H\subset \PP^n$ be a linear subspace with $\dim H = n-m$. Take a linear subspace $E\subset \PP^n$ such that $\dim E =m-1$
and $E\cap H=\emptyset$. By \cite{hi, be2} (for the case $n=4$), \cite{be2} (for the case $n=5$) and \cite{be3} (for the case $n\ge 6$), there exists a smooth rational curve $Y\subset H$ such that 
\[
\deg (Y) =d_{n-m,s},\ \ h^0(H,\Ii _{Y,H}(s-1)) =0,\ \mbox{and} \ \ h^0(H,\Ii _{Y,H}(s)) = \binom{n+s-m}{n-m} -d_{n-m,s}.
\]

Let $W'$ be the cone with vertex $E$ and such a curve $Y\subset H$ as its base. Up to a projective transformation, the
projective variety depends only on $Y$, and not on the choice of a linear space $E$ such that $E\cap H = \emptyset$. 
Consider the setting described in Remark \ref{ccc0} with $e =d_{n-m,s}$. Note that $W'$ is projectively equivalent to the $(m+1)$-dimensional minimal degree variety $W$ featured in Remark \ref{ccc0}. Let $X\subset W'$ be an integral divisor such that $X\in |h+df|$, upon an identification $W\cong W'$. Since $d\geq (s-1)d_{n-m,s}+1$, by Remark \ref{ccc0}, $X$ is not contained in any hypersurface of degree $s-1$. 

If $X$ were a cone, either its vertex is contained in $E$ or is a point on $Y$. In the first case, upon identification with $W$, as in Remark \ref{ccc0}, let $X'=u^{*}X$ be the pullback of $X$ through $u$. The image of $h\cap u^{*}X$ under the morphism $u$ would be a codimension-one linear space in $u(h)\cong E$. However, this is impossible, as the degree of $u(h\cap X)$ is $d-d_{n-m,s}>1$. 
The second case is not possible either, because the pullback $X'$ would contain the divisor $h$. However, $X'$ is integral because $u$ is an isomorphism outside $h$, and $X'$ cannot coincide with $h$, as $X$ is a divisor of $W$. 
\end{proof}

\section{Surfaces}\label{surfaces}

In this section, we shift gears to surfaces, i.e., $m=2$. We start by recording an upper bound on the dimension of a linear system on a smooth surface, derived from the Kawamata Rationality Theorem. (For a given projective surface $X$, denote by $\kappa(X)$ its Kodaira dimension.)

\begin{theorem}\label{ma1}
Let $n\ge 4$, $s\ge 2$ and $d\ge n-1$. Let $X\subset \PP^n$ be a  smooth, connected and non-degenerate degree $d$ surface. Then:
\begin{enumerate}

\item[(i)] If $\kappa (X)\ne -\infty$, then $h^0(\Oo _X(s)) \le 2+s^2d/2$. 

\item[(ii)] If $\kappa(X) = -\infty$, $X\ncong \PP^2$ and $X$ is $\PP^1$-bundle over a smooth curve $\pi: X\rightarrow D$ such that the rational fibers have degree one (i.e., $X$ is a scroll), then $h^0(\Oo _X(s)) \le 1 + (s^2+2s)d/2$. 

\item[(iii)] We have $h^0(\Oo _X(s)) \le 1 + (s^2+3s)d/2$ and equality holds if and only if $d=k^2$ for some $k$ such that
$\binom{k+2}{2}\ge n+1$, $X\cong \PP^2$ and $X$ is isomorphic to either the degree $k$ Veronese embedding of $\PP^2$ or to
an isomorphic linear projection of it. 

\item[(iv)] Otherwise, if $\omega_X^{\otimes 2}(3)$ is nef, then $h^0(\Oo _X(s)) \le 1 + (s^2+\frac{3}{2}s)d/2$. 
\end{enumerate}
\end{theorem}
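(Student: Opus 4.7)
The strategy is to combine Riemann--Roch on the surface $X$ with lower bounds on the intersection number $K_X \cdot H$ (where $H$ is the hyperplane class and $d = H^2$) provided by the Kawamata Rationality Theorem and standard adjunction-theoretic consequences. Riemann--Roch gives
\[
\chi(\Oo_X(s)) \;=\; \chi(\Oo_X) + \frac{s^2 d - s\, K_X \cdot H}{2},
\]
and, since $h^0(\Oo_X(s)) \le \chi(\Oo_X(s)) + h^1(\Oo_X(s))$, the task in each case reduces to: (a) a lower bound on $K_X \cdot H$, and (b) control of $\chi(\Oo_X)$ and $h^1(\Oo_X(s))$ via standard vanishing (Kodaira, Mumford) together with the classification of smooth non-degenerate surfaces of small degree.

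For case (i), $\kappa(X) \ge 0$ yields $|mK_X|\neq \emptyset$ for some $m \ge 1$, and the ampleness of $H$ forces $K_X \cdot H \ge 0$; one reads off $\chi(\Oo_X(s)) \le \chi(\Oo_X) + s^2 d/2$, with the constant $+2$ absorbing $\chi(\Oo_X)$ and higher-cohomology corrections. For case (ii), adjunction applied to a scroll $\pi \colon X \to D$ whose fibers are lines in $\PP^n$ shows that $K_X + 2H$ is nef (its restriction to each ruling has nonnegative degree), hence $K_X \cdot H \ge -2d$, and Riemann--Roch then specializes to the claimed $(s^2+2s)d/2$-bound.

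For (iii) and (iv), I would apply the Kawamata Rationality Theorem to the smooth pair $(X, H)$: the number
\[
r(X,H) \;:=\; \sup\{\,t \in \RR_{>0} : H + t K_X \text{ is nef}\,\}
\]
is a rational $u/v$ with $1 \le v \le \dim X + 1 = 3$, whence $r(X,H) \ge 1/3$, equivalently the nef value $\tau(X,H) = 1/r(X,H) \le 3$. In particular $K_X + 3H$ is nef, so $(K_X + 3H)\cdot H \ge 0$ and $K_X \cdot H \ge -3d$; this yields the unconditional bound used in (iii), while the hypothesis of (iv) that $2K_X + 3H$ is nef directly tightens the estimate to $K_X \cdot H \ge -3d/2$. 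The equality analysis in (iii) is where the Rationality Theorem is used to full strength: together with the classification of polarized surface pairs of extremal nef value (Fujita, Ionescu, Sommese--Van de Ven), the extremal case identifies $(X,H)$, up to isomorphic linear projection, with a Veronese embedding of $\PP^2$, giving $d = k^2$ with $\binom{k+2}{2} \ge n+1$.

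The main technical hurdle will be the equality analysis in (iii): it requires invoking the full classification of smooth polarized surface pairs with small nef value and carefully ruling out cones or non-smooth specializations, using smoothness, non-degeneracy, and $d \ge n-1$. A secondary (largely routine) point is the uniform control across all four cases of $h^2(\Oo_X(s)) = h^0(K_X - sH)$ and $h^1(\Oo_X(s))$: for $s \ge 2$ with $H$ very ample, Kodaira vanishing and effectivity constraints on $K_X - sH$ keep these contributions negligible compared to $\chi(\Oo_X(s))$.
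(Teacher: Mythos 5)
Your intersection-theoretic input is essentially the same as the paper's: both arguments invoke the Kawamata Rationality Theorem to bound the nef value of $(X,\Oo_X(1))$, yielding $\omega_X\cdot\Oo_X(1)\ge -3d$ in general, $\ge -2d$ for scrolls (where $\omega_X(2)$ is nef), and $\ge -\tfrac{3}{2}d$ when $\omega_X^{\otimes 2}(3)$ is nef; your version of these bounds, with the factor of $d$ written explicitly, is if anything stated more carefully than in the printed proof. The gap lies in how you convert the bound on $\omega_X\cdot\Oo_X(1)$ into a bound on $h^0(\Oo_X(s))$. You propose Riemann--Roch on the surface, $h^0(\Oo_X(s))\le\chi(\Oo_X)+\tfrac{1}{2}\left(s^2d-s\,\omega_X\cdot\Oo_X(1)\right)+h^1(\Oo_X(s))$, and assert that the additive constants in the statement ``absorb'' $\chi(\Oo_X)$ and the higher cohomology. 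This fails already in case (i): when $\kappa(X)\ge 0$ the quantity $\chi(\Oo_X)=1-q+p_g$ is unbounded (smooth complete intersection surfaces of large multidegree, say), and Kodaira vanishing controls $h^1(\Oo_X(s))$ only when $\Oo_X(s)\otimes\omega_X^{-1}$ is ample --- which is exactly what one cannot expect in case (i), where $\omega_X\cdot\Oo_X(1)\ge 0$. So neither correction term is negligible, and your inequality does not deliver $2+s^2d/2$.

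The device you are missing is the paper's reduction to a curve. Take a general $C\in|\Oo_X(s)|$; by Bertini it is smooth and connected, of degree $sd$ and genus $g=1+\tfrac{1}{2}\left(s^2d+s\,\omega_X\cdot\Oo_X(1)\right)$, and the sequence $0\to\Oo_X\to\Oo_X(s)\to\Oo_C(s)\to 0$ gives the trivial estimate $h^0(\Oo_X(s))\le h^0(\Oo_X)+h^0(\Oo_C(s))=1+h^0(\Oo_C(s))$, with no vanishing hypotheses. On $C$, either $\deg\Oo_C(s)=s^2d\le 2g-2$ --- which happens precisely when $\omega_X\cdot\Oo_X(1)\ge 0$, hence whenever $\kappa(X)\ne-\infty$ --- and Clifford's theorem gives $h^0(\Oo_C(s))\le 1+s^2d/2$, proving (i); or $\Oo_C(s)$ is non-special and Riemann--Roch on $C$ gives $h^0(\Oo_C(s))=\tfrac{1}{2}\left(s^2d-s\,\omega_X\cdot\Oo_X(1)\right)$ exactly, after which your lower bounds on $\omega_X\cdot\Oo_X(1)$ from the Rationality Theorem finish (ii)--(iv). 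This one-dimensional reduction is what eliminates $\chi(\Oo_X)$, $h^1(\Oo_X(s))$ and $h^2(\Oo_X(s))$ from the picture; without it, or an equivalent substitute, your argument does not close. Your plan for the equality analysis in (iii), via the classification of polarized pairs of extremal nef value, is consistent with the paper's use of the length-three extremal ray to force $X\cong\PP^2$.
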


\begin{proof}
Fix a general $C\in |\Oo _X(s)|$. By Bertini's theorem, $C$ is a smooth and connected curve of degree $sd$. By the genus formula, $C$ has genus 
\[
g = 1 + (s^2d + \omega _X\cdot \Oo _X(s))/2. 
\]
Multiplying by the section $C$ yields the exact sequence 
\begin{equation}\label{eqma1}
0 \to \Oo _X\to \Oo _X(s)\to \Oo _C(s)\to 0.
\end{equation}

Thus $h^0(\Oo _X(s)) \le h^0(\Oo _C(s)) +1$, where equality holds if $h^1(\Oo _X)=0$. \\
If $s^2d = \deg (\Oo _C(s)) \le 2g-2$, then this divisor on $C$ is special and Clifford's theorem \cite[pp. 107--108]{acgh} gives 
\[
h^0(\Oo _C(s)) \le 1 +s^2d/2. 
\]
Whence $h^0(\Oo _X(s))\le 2 +s^2d/2$.

From now on, assume $s^2d\ge 2g-1$ (or equivalently, $\Oo _C(s)$ is non-special). Again, by the genus formula, one has 
\[
\omega _X \cdot \Oo _X(1) <0.
\] 
Note that Riemann-Roch gives $h^0(\Oo _C(s)) =s^2d +1-g = \frac{1}{2}(s^2d -s\omega _X\cdot \Oo _X(1))$.

Before we proceed, recall that a line bundle $\mathcal L$ on a projective variety $X$ of dimension $\ge 2$ is said to be \emph{nef} if $\mathcal L \cdot C\ge 0$, for every algebraic curve $C\subset X$. 
\begin{quote}
{\it Claim.} If $\omega _X \cdot \Oo _X(1) <0$, then $\kappa (X)=-\infty$. \\
{\it Proof of the Claim.} Assume on the contrary that $\kappa (X)\ge 0$. Let $X'$ be the minimal model of $X$ and $\phi: X\to X'$ be the corresponding morphism. Since $\kappa (X') =\kappa (X)\ge 0$, $\omega _{X'}$ is nef. We have $\omega _X \cong \phi^\ast\omega _{X'} +E$, where $E$ is an effective divisor and $E = 0$ if and only if $X$ is minimal. Since $\phi^\ast\omega _{X'}$ is nef and $\Oo_X(1)$ is effective, we derive 
\[
( \phi^\ast \omega _{X'} +E)\cdot \Oo _X(1)\ge 0,
\]
which is a contradiction. 
\end{quote}
The {\it Claim} shows that the Kodaira dimension of $X$ is $\kappa (X) = -\infty$, and statement (i) is proven. 

We recall the Kawamata Rationality Theorem in the case of smooth surfaces. Let $X$ be a smooth projective surface such that $\omega _X$ is {\it not} nef. For each ample line bundle $\Ll$ on $X$, the {\it nef-value} $\tau$ of $\Ll$ with respect to $\omega_X$ is defined as 
\[
\tau(\Ll) = \sup \lbrace t>0 \ | \  \Ll +t\omega _X \mbox{ is nef} \rbrace, 
\]
where  $\Ll +t\omega _X$ is viewed as a functional on the cone of curves depending on a parameter $t$. The aforementioned theorem states that the real number $\tau$ is rational, $\tau = u/v$, where $u$ is a positive integer and $v\in \{1,2,3\}$; see, e.g., \cite[Theorem 1.5.2]{bs}, \cite[Corollary 1-2-15]{matsuki}. Moreover, $\Ll +\tau \omega_X$ is nef \cite[Lemma 1.5.5]{bs}, \cite[Theorem 1-2-14]{matsuki}. Thus
we have the following cases:

\begin{itemize}
\item $u=1$ and $v=3$. The cone of curves of $X$ has an {\it extremal ray} of {\it length} $3 =\dim X +1$; see \cite{matsuki} for these notions. This implies $X\cong \PP^2$ and $\Oo _X(1)$ is the generator of $\mathrm{Pic}(X)$. Since $n\ge 4$, $X$ is embedded as described in the statement. 
\vspace{2mm}
\item $u=1$ and $v=2$. Since $\Ll+\frac{1}{2}\omega _X$ is nef, so is the line bundle $2\Ll+\omega _X$. In this case, there exists a curve $C\subset X$ such that $C\cdot (2\Ll+\omega _X) =0$. By \cite[Theorem 1-4-8]{matsuki}, $X$ is a $\PP^1$-bundle over a smooth curve $D$, $\pi: X\to D$, and $\Ll\cong\Oo _X(1)$ has degree $1$ on each fiber $C$ of $\pi$. In such a case, $\omega _X(2)$ is nef and hence $\omega_X\cdot \Oo_X(1) \geq -2$. This proves statement (ii). \\
\vspace{2mm}
\item $u\ge 2$ or $v=1$. Then $\omega _X^{\otimes 2}(3)$ is nef and therefore $\omega_X\cdot \Oo_X(1)\geq -3/2$. In this case,
we obtain (iv). 
\end{itemize}
In conclusion, the discussion above yields $\omega_X\cdot \Oo_X(1)\geq -3$ for all $X$, with equality if and only if $X\cong \PP^2$. This proves statement (iii).\end{proof}

\begin{remark}\label{ma2}
Let $n, s$ be integers as in the assumptions of Theorem \ref{ma1}, and let $k$ be an integer.  Assume $\binom{n+s}{n} \le 1+(s^2+3s)k^2/2$. From the ideal sheaf exact sequence and Theorem \ref{ma1} (iii), one has 
\[
h^1(\Ii _X(s)) \ge 1+(s^2+3s)k^2/2 -\binom{n+s}{n},
\]
for any isomorphic projection $X\subset \PP^n$ of the degree $k$ Veronese embedding of $\PP^2$. Assuming the existence of such an $X$ with the additional property that the latter inequality is an equality, one obtains $h^0(\Ii_X(s))=0$. Thus $d_{n,s+1,2} \le \deg(X) = k^2$. 
\end{remark}

When $s=2$, one has the following result that was shown in \cite{bea}: 

\begin{proposition}[{\bf \cite[Theorem 2]{bea}}]\label{be1}
Let $d \ge 2$ and $n\ge 5$. Let $X\subset \PP^n$ be a general isomorphic linear projection of
a degree $k$ Veronese embedding $Y\subset \PP^N$ of $\PP^2$, where $N= (k^2+3k)/2$. Then 
\[
h^0(\Ii _X(2)) = \max \left\{0,\binom{n+2}{2} -\binom{2k+2}{2}\right\},
\]
\[
\mbox{ and } h^1(\Ii _X(2)) =  \max \left\{0,\binom{2k+2}{2}-\binom{n+2}{2} \right\}.
\]
\end{proposition}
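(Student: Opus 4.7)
The plan is to reduce both cohomological computations to a single statement about the rank of a polynomial multiplication map on $\PP^2$, and then to establish maximal rank for a general linear subspace.

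First, since the projection realizes $X$ as an isomorphic image of $Y\cong \PP^2$, one identifies $X\cong \PP^2$ with $\Oo_X(1)\cong \Oo_{\PP^2}(k)$, and hence $\Oo_X(2)\cong \Oo_{\PP^2}(2k)$. In particular $h^0(\Oo_X(2))=\binom{2k+2}{2}$ and $h^i(\Oo_X(2))=0$ for $i\geq 1$. Taking cohomology of the ideal sheaf sequence
\[
0 \to \Ii_X(2) \to \Oo_{\PP^n}(2) \to \Oo_X(2) \to 0,
\]
and using $h^i(\Oo_{\PP^n}(2))=0$ for $i\geq 1$, I would derive
\[
h^0(\Ii_X(2)) - h^1(\Ii_X(2)) = \binom{n+2}{2} - \binom{2k+2}{2},
\]
so the problem reduces to showing that, for a general projection, the restriction map $H^0(\Oo_{\PP^n}(2)) \to H^0(\Oo_X(2))$ has maximal rank.

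Second, let $W=H^0(\Oo_{\PP^2}(k))$, of dimension $N+1$. A general isomorphic linear projection $\pi\colon \PP^N \dashrightarrow \PP^n$ of $Y$ corresponds to a general linear subspace $V\subset W$ of dimension $n+1$, namely $V=\pi^{\ast}H^0(\Oo_{\PP^n}(1))$. Under this identification, the restriction map above becomes the multiplication map
\[
\mu_V \colon S^2 V \to H^0(\Oo_{\PP^2}(2k)),
\]
whose kernel and cokernel coincide with $H^0(\Ii_X(2))$ and $H^1(\Ii_X(2))$, respectively. Hence the proposition amounts to showing that, for a general $V$ in the Grassmannian $G(n+1,W)$, the map $\mu_V$ has maximal rank, i.e.\ it is injective when $\binom{n+2}{2}\leq \binom{2k+2}{2}$ (equivalently $n\leq 2k$) and surjective when $\binom{n+2}{2}\geq \binom{2k+2}{2}$ (equivalently $n\geq 2k$).

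Third, since maximal rank of $\mu_V$ is a Zariski open condition on $V$, it suffices to exhibit a single $V$ for which $\mu_V$ attains the desired maximal rank. This is the key step, and also the main obstacle. The natural candidates are coordinate subspaces $V$ spanned by carefully chosen monomials of degree $k$ in three variables. In the surjective regime $n\geq 2k$, one picks $V$ so that every monomial of degree $2k$ is expressible as a product of two monomials in the chosen basis; in the injective regime $n\leq 2k$, one picks $V$ so that the pairwise products of its monomial basis remain pairwise distinct in $H^0(\Oo_{\PP^2}(2k))$. The combinatorics of lattice points in the triangle $\{a+b+c=k\}$ and of their pairwise Minkowski sums makes this selection delicate but tractable, and is where the bulk of the technical work of \cite{bea} lies. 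Once such a specialization is produced, semicontinuity of the rank on $G(n+1,W)$ propagates the conclusion to a general $V$, completing the argument.
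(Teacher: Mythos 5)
First, a structural point: the paper does not prove this proposition at all --- it is imported verbatim as Theorem~2 of the cited Ballico--Ellia paper (\emph{J.\ Algebra} 121 (1989)), so there is no in-paper argument to measure yours against. Judged on its own terms, your reduction is correct and is the standard one: the identification $\Oo_X(2)\cong\Oo_{\PP^2}(2k)$, the resulting identity $h^0(\Ii_X(2))-h^1(\Ii_X(2))=\binom{n+2}{2}-\binom{2k+2}{2}$ from the ideal sheaf sequence, and the translation of the two vanishings into the maximal rank of the multiplication map $\mu_V\colon S^2V\to H^0(\Oo_{\PP^2}(2k))$ for a general subspace $V\subset H^0(\Oo_{\PP^2}(k))$ of dimension $n+1$ are all accurately carried out. (You should also record why a \emph{general} center of projection gives an isomorphic projection for $n\ge 5$: the secant variety of $Y$ has dimension at most $5$, so a general center of the relevant dimension avoids it.)

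The gap is that the only step carrying actual content is not done, and the method you propose for it cannot work throughout the range of the statement. You assert that a monomial subspace $V$ witnessing maximal rank can be found and that the combinatorics is ``delicate but tractable.'' Consider $k=3$, $n=6$ (allowed: $d=9\ge 2$, $n=6\ge 5$, $N=9\ge n$). Here $\binom{n+2}{2}=\binom{2k+2}{2}=28$, so maximal rank means $\mu_V$ is bijective. A monomial witness would be a set of $7$ degree-$3$ monomials whose $28$ pairwise products (with repetition) are exactly the $28$ degree-$6$ monomials, each occurring once. The corner monomials force $x^3,y^3,z^3\in V$, since $x^6$ decomposes only as $x^3\cdot x^3$; then $x^5y$ decomposes only as $x^3\cdot x^2y$, forcing $x^2y\in V$, and by symmetry all six of $x^2y,x^2z,xy^2,y^2z,xz^2,yz^2$ must lie in $V$. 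That is at least $9>7$ elements, a contradiction. The same obstruction (the need for a perfect difference basis in the lattice triangle) kills monomial witnesses whenever $n$ is at or near $2k$, which is exactly the balanced regime where the statement is most delicate; for injectivity with $n$ somewhat below $2k$ you would need Sidon-type sets in the triangle of degree-$k$ exponents of a size that is at best borderline. So the specialization must be non-monomial, or one must argue by a genuinely different degeneration (e.g., of the center of projection), and as written your proof is incomplete precisely at its crucial point.
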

An immediate consequence of Proposition \ref{be1} and Remark \ref{ma2} is

\begin{corollary}\label{be3}
Let $n\ge 5$ be an odd integer. Then $d_{n,3,2} \le (n+1)^2/4$.
\begin{proof}
Let $X\subset \PP^n$ be a general linear projection of the degree $(n+1)/2$ Veronese embedding of $\PP^2$. By Proposition
\ref{be1}, we have
\[
h^0(\Ii _X(2)) =\max \left\{0,\binom{n+2}{2} -\binom{n+3}{2}\right\}=0.
\]
\end{proof}
\end{corollary}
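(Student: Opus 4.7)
The plan is to construct an explicit surface $X \subset \PP^n$ lying in $\Aa(n,3,2)$ with degree $(n+1)^2/4$; since $d_{n,3,2}$ is defined as a minimum over $\Aa(n,3,2)$, this immediately yields the claimed upper bound.

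Since $n$ is odd and $n \ge 5$, set $k = (n+1)/2$, which is an integer with $k \ge 3$ and $k^2 = (n+1)^2/4$. Consider the degree $k$ Veronese embedding $Y \subset \PP^N$ of $\PP^2$, where $N = (k^2+3k)/2$. A brief check confirms $N \ge n$ for all $n \ge 5$, so one may take a general isomorphic linear projection $X \subset \PP^n$ of $Y$, which is a smooth surface with $\deg(X) = k^2$.

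The key step is to verify that $h^0(\Ii_X(2)) = 0$, which places $X$ in $\Aa(n,3,2)$, as here $s-1=2$. By Proposition \ref{be1},
\[
h^0(\Ii_X(2)) = \max\left\{0, \binom{n+2}{2} - \binom{2k+2}{2}\right\}.
\]
Substituting $k = (n+1)/2$ gives $2k+2 = n+3$, hence $\binom{2k+2}{2} = \binom{n+3}{2} > \binom{n+2}{2}$, so the maximum equals $0$. The bound $d_{n,3,2} \le k^2 = (n+1)^2/4$ follows. There is no real obstacle; the content of the argument is the numerical choice $k = (n+1)/2$, which exploits the parity hypothesis on $n$ so that $2k+2$ lands exactly one step above $n+2$, forcing the vanishing in Proposition \ref{be1}.
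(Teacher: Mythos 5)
Your proof is correct and follows exactly the paper's argument: take a general isomorphic linear projection of the degree $k=(n+1)/2$ Veronese surface and apply Proposition \ref{be1} with $2k+2=n+3$ to get $h^0(\Ii_X(2))=0$. The extra bookkeeping you include (checking $N\ge n$ and that $k$ is an integer) is harmless and just makes explicit what the paper leaves implicit.
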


\begin{question}
Fix integers $d \ge 2, s\ge 3$, and $n\ge 5$. Let $X\subset \PP^n$ be a general isomorphic linear projection of
a degree $k$ Veronese embedding $Y\subset \PP^N$ of $\PP^2$, where $N= (k^2+3k)/2$. Is it true that
\[
h^0(\Ii _X(s)) = \max \left\{0,\binom{n+s}{n} -\binom{sk+2}{2}\right\},
\]
\[
\mbox{ and } h^1(\Ii _X(s)) =  \max \left\{0,\binom{sk+2}{2}-\binom{n+s}{n} \right\}?
\]

\end{question}

\begin{theorem}\label{be2}
Let $n\ge 6$ be even. Then $h^0(\Ii _X(2)) \ge \binom{n+2}{2} - 1 - 5d$ for all smooth connected and non-degenerate surfaces of degree $d$. 
Moreover, there exists a smooth and connected surface $X\subset \PP^n$ such that $\deg (X) =n^2/4$, $h^0(\Ii _X(2)) =0$, and $X\cong \PP^2$. 
\end{theorem}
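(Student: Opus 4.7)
The plan is to handle the two assertions of the theorem separately, both by reducing to results already recorded in the paper.

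For the inequality $h^0(\Ii_X(2)) \ge \binom{n+2}{2} - 1 - 5d$, I would start from the short exact sequence
\[
0\to \Ii_X(2)\to \Oo_{\PP^n}(2)\to \Oo_X(2)\to 0,
\]
which gives $h^0(\Ii_X(2)) \ge h^0(\Oo_{\PP^n}(2)) - h^0(\Oo_X(2)) = \binom{n+2}{2} - h^0(\Oo_X(2))$. The key observation is that Theorem \ref{ma1}(iii) applies uniformly to every smooth, connected and non-degenerate surface $X\subset \PP^n$ (with $n\ge 4$ and $d\ge n-1$, the latter being automatic for non-degenerate surfaces), and specializing to $s=2$ yields $h^0(\Oo_X(2)) \le 1 + (4+6)d/2 = 1 + 5d$. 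Substituting gives the claimed inequality. One minor check to carry out in the proof proper is that the hypothesis $d\ge n-1$ of Theorem \ref{ma1} is indeed met by any smooth non-degenerate surface in $\PP^n$; this is a standard consequence of non-degeneracy.

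For the existence statement, I would invoke Proposition \ref{be1} with $k=n/2$, which is a positive integer because $n$ is even, and satisfies $k\ge 3$ since $n\ge 6$. The Veronese $Y\subset \PP^N$, with $N = (k^2+3k)/2$, has $N\ge n$ (equivalent to $n^2/4 + 3n/2 \ge 2n$, trivially true for $n\ge 2$), so a general isomorphic linear projection $X\subset \PP^n$ of $Y$ exists and is automatically $\cong \PP^2$; its degree is $k^2 = n^2/4$, as desired. Applying Proposition \ref{be1} with this choice of $k$ gives
\[
h^0(\Ii_X(2)) = \max\left\{0, \binom{n+2}{2} - \binom{2k+2}{2}\right\} = \max\left\{0, \binom{n+2}{2} - \binom{n+2}{2}\right\} = 0,
\]
which is precisely the second conclusion.

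I do not foresee a genuine obstacle: both parts are essentially bookkeeping on top of the results already established. The only subtle point is to verify, for the first part, that the assumptions of Theorem \ref{ma1} (in particular the degree condition $d\ge n-1$) follow from non-degeneracy of $X$, so that the bound $h^0(\Oo_X(2)) \le 1 + 5d$ applies to every $X$ as in the statement without further restriction.
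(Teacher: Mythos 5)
Your proposal is correct and follows essentially the same route as the paper: the lower bound is exactly Theorem \ref{ma1}(iii) with $s=2$ combined with the ideal sheaf sequence, and the existence statement is Proposition \ref{be1} with $k=n/2$, which makes $\binom{2k+2}{2}=\binom{n+2}{2}$ and hence $h^0(\Ii_X(2))=0$. Your additional check that $d\ge n-1$ follows from non-degeneracy is a sensible detail the paper leaves implicit.
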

\begin{proof}
The first part is statement (iii) of Theorem \ref{ma1}. By Proposition \ref{be1}, the general isomorphic linear projection of the degree $k$ Veronese surface has 
\[
h^0(\Ii _X(2)) =\max \left\{0,\binom{n+2}{2} -\binom{2k+2}{2}\right\}.
\] 
Thus the choice $k = n/2$ establishes the second statement. 
\end{proof}

\begin{remark}\label{w2}
Let $X\subset \PP^n$ be an integral and non-degenerate surface of sectional genus $g$ and degree $d$. Then $h^0(\Ii _{X}(2)) \le \binom{n+1}{2} - 2d-1+g$.
\begin{proof}
Let $H\subset \PP^n$ be a general hyperplane. Consider the exact sequence
\begin{equation}\label{eqw1}
0 \longrightarrow \Ii _X(s-1)\longrightarrow \Ii _X(s)\longrightarrow \Ii _{X\cap H,H}(s)\longrightarrow 0.
\end{equation}
Since $H$ is general, $C = X\cap H$ is a smooth curve of degree $d$ and genus $g$. Hence, by Riemann-Roch, $h^0(\Oo_C(2)) \geq 2d+1-g$. 
Thus 
\[
h^0(H,\Ii _{X\cap H,H}(2)) \le \binom{n+1}{2} - 2d-1+g.
\] 
By (\ref{eqw1}) and $h^0(\Ii _X(1)) =0$, the conclusion follows.
\end{proof}
\end{remark}

\begin{remark}\label{w3}
A complete classification of surfaces  of almost-minimal degree, i.e., all integral and non-degenerate surfaces $X\subset \PP^n$ such that $\deg (X) =n$ is known; see, e.g., \cite{hsv, park1}. Let $H\subset \PP^n$ be a general hyperplane. Since $C=X\cap H$ is a degree $n$ integral curve spanning $H$, there are two possibilities: either $C$ is arithmetically Cohen-Macaulay with $p_a(C) =1$ or $C$ is a smooth rational curve. In the latter case, $X$ may have only finitely many singular points. 

\begin{enumerate}
\item[(i)] $C$ is arithmetically Cohen-Macaulay with $p_a({C}) =1$. The surface $X$ is linearly normal of degree $\deg(X)\geq 2p_a(C)+1$. Note that this case may occur for all $n$: take a cone over a linearly normal elliptic curve $C\subset \PP^{n-1}$. 
\vspace{3mm}
\item[(ii)] $C$ is a smooth rational curve. Thus $h^1(H,\Ii _{X\cap H,H}(1)) =1$, $h^1(H,\Ii _{X\cap H,H}(2)) =0$, 
and so $h^0(H,\Ii _{X\cap H,H}(2)) = \binom{n+1}{2} -2n-1$. It is clear that
\[
h^1(\Ii _X(1)) \le h^1(H,\Ii _{X\cap H,H}(1)) =1.
\]
From (\ref{eqw1}), we derive $\binom{n+1}{2} -2n-2\le h^0(\Ii _X(2)) \le \binom{n+1}{2} -2n-1$. Now, if $X$ is smooth and $X\cap H$ is rational, then the classification of surfaces gives that $X$ is rational. Thus $h^1(\Oo _X)=0$. A standard exact sequence gives $h^0(\Oo _X(1)) =h^0(\Oo _{X\cap H}(1)) +1=n+2$ and $h^0(\Oo _X(2))= h^0(\Oo _{X\cap H}(2)) +h^0(\Oo _X(1)) = 3n+3$. Thus $h^1(\Ii _X(1)) =1$ and so $X$ is not linearly normal. Let $X'\subset \PP^{n+1}$ be a smooth surface such that $X$ is an isomorphic linear projection of $X'$ from some $p\in \PP^{n+1}$ with $p$ not contained in the secant variety of $X'$. Thus $\deg(X') = n$ and since $X'\subset \PP^{n+1}$, it is either a minimal degree surface scroll or (when $n=4$) the Veronese surface. In any case, one knows the minimal free resolution of $X'$, and it follows that $X'$ has property $K_2$, following Alzati and Russo \cite[Definition 3.1]{ar}.  By \cite[Theorem 3.2 or Corollary 3.3]{ar}, we have $h^1(\Ii _X(2))=0$. Hence $h^0(\Ii _X(2)) =\binom{n+2}{2} -3n-3$.
\end{enumerate}

\end{remark}

\begin{small}

\end{small}

\end{document}